\DeclareMathOperator{\Tr}{Tr}
\newtheorem{theorem}{Theorem}
\newtheorem{remark}{Remark}
\newcommand{\pder}[2][]{\frac{\partial#1}{\partial#2}}
\title{Combination Properties \\ of Weakly Contracting Systems}
\author{Ian R. Manchester and Jean-Jacques E. Slotine}
\date{}
\begin{document}
\maketitle

\begin{abstract}

Evaluating the dimension of attractors for autonomous nonlinear
dynamical systems has a distinguished history. In this paper we recast
these classical results in the context of contraction theory. In
particular, we explore the role of Riemannian metrics, quantify
convergence rates and the effects of averaging, and derive general
system combination properties. Many of these results extend
straightforwardly to non-autonomous systems, and suggest potential
applications in computational biology and systems neuroscience.

\end{abstract}

\section{Basic results}

Consider an autonomous system with state $x\in \mathbb R^n$ and dynamics
\begin{equation}\label{eq:sys}
\dot x = f(x)
\end{equation}
We are interested in the limiting behaviour of such a system as $t
\rightarrow + \infty$, and how this behaviour is preserved under
interconnection. We assume that the system evolves on a compact and
simply-connected strictly forward-invariant set $\mathcal X$. That is,
any solution starting on the boundary of $\mathcal X$ at $t=0$ remains
in the interior of $\mathcal X$ for $t\ge0$. The matrix $J(x) =
\pder[f]{x}$ denotes the system's Jacobian, and $J_s(x) =
\frac{1}{2}(J(x)+J(x)')$ its symmetric part.

For a symmetric $n\times n$ matrix $H$, we define $\lambda_1(H) \ge \lambda_2(H) \ge  ... \ge \lambda_n(H)$ as the eigenvalues of $H$ ranked in non-increasing order. We also define a function giving the sum of the $k$ largest eigenvalues:
\[
S_k(H)\ =\ \sum_{j=1}^k \lambda_j(H)
\]
We are interested in systems satisfying the following property:
\[
\forall x \in \mathbb R^n \ \ \ \ \ \ \ \ \ S_k(J_s(x))<0
\]
Such a property allows one to bound the Hausdorff dimension of an attractor of \eqref{eq:sys}, and to our knowledge was first investigated by Douady and Oesterl{\' e} \cite{douady1980dimension}, and subsequently studied by Smith \cite{smith1986some} and Leonov and colleagues (see, e.g., \cite{leonov1996frequency}, \cite{boichenko2005dimension}). Extensions include non-autonomous systems and the addition of a storage function to reduce conservatism.

In particular, if $k=2$, i.e. $\lambda_1(J_s)+\lambda_2(J_s)<0$, then
all bounded trajectories converge to an equilibrium (not necessarily
unique), i.e. the dimension of any attractor is zero. Following
Leonov, we refer to this property of eigenvalues as {\em weak
  contraction} and to a system having this property as a {\em weakly
  contracting} system.

By contrast, a contracting system (in the identity metric) has the
property that $S_1(J_s(x))<0$, i.e. the largest eigenvalue $\lambda_1(J_s(x))$ is negative \cite{lohmiller1998contraction}.

Convergence to an equilibrium is unchanged by coordinate
transformation, so we will also call a system weakly contracting if
there exists a constant nonsingular matrix $ \Theta \ $ such that
$S_2(\Theta J(x) \Theta^{-1} +( \Theta J(x) \Theta ^{-1})')\ <0\ $ for
all $x$.  When we need to be specific, we say that the system is {\em
  weakly contracting under $T$}\cite{boichenko2005dimension}.  More
generally, as in \cite{lohmiller1998contraction}, one can consider
transformations $\Theta(x,t)$, where $\Theta(x,t)^T \Theta(x,t)$ is a
uniformly positive definite Riemannian metric, and the system is
weakly contracting if $S_2(F_s)< 0$ for all $x$, where $ \ F \ $ is the
{\em generalized Jacobian}~\cite{lohmiller1998contraction} associated
with $\Theta(x,t)$,
\[
F\ = \ \Theta J \Theta^{-1} \ + \ \dot{\Theta} \Theta^{-1}
\]
In the yet more general case where the metric transformation $
\ \Theta(x,t) \ $ is complex, the condition becomes that $ \ S_2(F_H)<
0 \ $ for all $x$, where $ \ F_H \ $ is the Hermitian part of the
generalized Jacobian $ F $ and the Riemannian metric is $\Theta(x,t)^{*T} \Theta(x,t)$.

Note that if it is known that the system has a unique equilibrium and
trajectories are bounded, then the condition that
$\lambda_1+\lambda_2<0$ in some metric (i.e., $ \ S_2(F_H)< 0 \ $ for
some $\Theta(x,t)$ ) guarantees global convergence to that equilibrium,
using a weaker condition than the full contraction condition
$\lambda_1 < 0$ (i.e., $ \ S_1(F_H)< 0 \ $ ) in that metric.

\begin{remark} For computational methods, it is important to note that $S_k(H)$ has a representation, due to Ky Fan, as
\[
S_k(H) = \max_{V\in \mathcal V_k}\Tr(VHV')
\]
where $\mathcal V_k$ is the set of all $k\times n$ matrices with orthonormal rows, i.e. satisfying $VV'$ is the identity matrix of size $k$. From this construction, it is clear that $S_k$ is convex, since it is the maximum of an infinite family of functions linear in $M$. In fact, $S_k(H)$ can be represented as a linear matrix inequality \cite[p. 238]{nesterov1994interior}.

Intuitively, in the above equation $ \ VHV' \ $ is the projection of the symmetric
matrix $H$ on an orthonormal basis spanning a $k$-dimensional subspace of
$R^n$, and the $\max$ operation picks the basis aligned with the
eigenvectors corresponding to the largest eigenvalues of $H$.

\end{remark}

\subsection{Example}
A simple damped pendulum
\[
\ddot x + b\dot x + \sin x = 0
\]
naturally has a cylindrical phase space with two equilibria, however if the angle $x$ is considered as a real number then there are equilibria at $k\pi$, $k \in \mathbb Z$. Defining a state $[x, \dot x]'$ yields the Jacobian
\[
J = \begin{bmatrix}0& 1 \\ -\cos x & -b \end{bmatrix}
\]
Since the system is two-dimensional, the condition on the eigenvalues
of $J_s$ is directly given by the trace of $J_s$, which is simply the trace of $J$,
\[
\lambda_1+\lambda_2 \ = \ -b\ < \ 0
\]
Hence every system trajectory converges to an equilibrium. Note that any system
trajectory starting at one of the unstable equilibria remains there,
while any other trajectory converges to a stable equilibrium.

Note that one can easily shows first that all system trajectories are bounded,
as the theorem requires.

For planar systems, weak contraction is related to the Bendixson
criterion for non-existence of limit cycles, but of course weak
constraction extends to higher-order systems.

\section{Combination Properties}\label{combinations}

In the spirit of \cite{lohmiller1998contraction}, we remark that it is straightforward to show that certain combinations of contracting and weakly contracting systems preserve the weak contraction property, and therefore the have the property that all solutions converge to an equilibrium. We assume that the combination does not result in the system leaving the set on which the weak contraction condition holds.

\subsection{Parallel Interconnection}

\begin{theorem}
Assume that two systems $f_a$ and $f_b$ are weakly contracting under the same metric transformation $ \Theta $, then so is the system
\[
\dot x = \alpha f_a(x) + \beta f_b(x)
\]
where $\alpha, \beta$ are non-negative constants, and $\alpha+\beta>0$.
\end{theorem}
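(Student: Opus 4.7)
The plan is to reduce the combined system's weak contraction condition to a nonnegative linear combination of those of $f_a$ and $f_b$, and then exploit sublinearity of $S_2$ arising from the Ky Fan representation.

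First I would write down the generalized Jacobian for the combined system $\dot x = \alpha f_a + \beta f_b$. The ordinary Jacobian is linear in the vector field, so $J = \alpha J_a + \beta J_b$ and therefore $\Theta J \Theta^{-1} = \alpha \,\Theta J_a \Theta^{-1} + \beta\, \Theta J_b \Theta^{-1}$. Since the setting is autonomous, I will take $\Theta = \Theta(x)$, so $\dot\Theta = \sum_i \partial_{x_i}\Theta \cdot \dot x_i$ is itself linear in $\dot x$; substituting $\dot x = \alpha f_a + \beta f_b$ gives $\dot\Theta = \alpha \dot\Theta_a + \beta \dot\Theta_b$, where $\dot\Theta_a$ and $\dot\Theta_b$ denote the time derivatives of $\Theta$ along the respective flows. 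Assembling these pieces yields
\[
F \;=\; \alpha F_a + \beta F_b,
\]
and taking the Hermitian part gives $F_H = \alpha F_{a,H} + \beta F_{b,H}$.

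Next I would invoke sublinearity of $S_2$. From the Ky Fan representation $S_k(H) = \max_{V \in \mathcal V_k} \Tr(VHV^*)$ recalled in the Remark, $S_2$ is the pointwise maximum of linear functions, hence convex, and it is clearly positively homogeneous. Thus for any $\alpha,\beta\ge 0$,
\[
S_2(\alpha F_{a,H} + \beta F_{b,H}) \;\le\; \alpha\, S_2(F_{a,H}) + \beta\, S_2(F_{b,H}).
\]
Since each summand on the right is strictly negative by hypothesis and $\alpha+\beta>0$, the right side is strictly negative, establishing $S_2(F_H) < 0$ for the combined system.

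The only delicate point is the treatment of the $\dot\Theta\, \Theta^{-1}$ term: if one allowed explicit time dependence $\Theta(x,t)$ then $\dot\Theta$ acquires a $\partial_t \Theta$ contribution independent of the vector field, and the clean identity $F = \alpha F_a + \beta F_b$ would pick up a residual $(1-\alpha-\beta)\,\partial_t\Theta\,\Theta^{-1}$. This vanishes automatically in the autonomous setting assumed here (and, more generally, whenever one restricts to the convex case $\alpha+\beta=1$), so this obstacle is easily dispatched; the rest is just the linearity of the Jacobian and the sublinearity inequality above.
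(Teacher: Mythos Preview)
Your argument is correct and follows essentially the same route as the paper: linearity of the (generalized) Jacobian in the vector field, followed by the convexity/sublinearity of $S_2$ coming from the Ky Fan representation. The paper's version normalizes by $\alpha+\beta$ to reduce to a convex combination before applying convexity, whereas you invoke positive homogeneity plus convexity (i.e.\ sublinearity) directly; these are equivalent. Your treatment is in fact more careful than the paper's sketch in that you explicitly track the $\dot\Theta\,\Theta^{-1}$ contribution for state-dependent $\Theta(x)$ and note the caveat for time-varying metrics, points the paper's proof glosses over.
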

\begin{proof}
The symmetric part of the Jacobian is
\[
J_s^{ab} = \alpha J_s^a+\beta J_s^b
\]
so that  $J_s^{ab}<0 \Leftrightarrow J_s^{ab}/(\alpha+\beta)\ <\ 0$. But the latter is a convex combination of $J_s^a$ and $J_s^b$ both of which are negative by assumption, and $S_2$ is a convex function, $J_s^{ab}<0$.
\end{proof}

By recursion, this can be extended to non-negative combinations of any number of weakly contracting systems.

\subsection{Skew-Symmetric Feedback Interconnection}

Consider a skew-symmetric interconnection of a
weakly contracting system $f_a$ and a contracting system $f_b$, with
the Jacobian
\[
J^{ab}(x) = \begin{bmatrix}J^a(x) & G(x) \\ -G(x)' & J^b(x) \end{bmatrix}
\]
for arbitrary $G(x)$. We define $\lambda_i^a = \lambda_i(J_s^a)$ and likewise for $\lambda_i^b$.

\begin{theorem}\label{thm:fbk} If $\lambda_1^a+\lambda_1^b<0$ then the feedback interconnection is weakly contracting.
\end{theorem}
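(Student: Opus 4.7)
The plan is to exploit the skew-symmetric off-diagonal structure of $J^{ab}$. When forming the symmetric part $J_s^{ab} = \frac{1}{2}(J^{ab} + (J^{ab})')$, the upper-right block $G$ cancels against the $-G$ contributed by the transpose of the lower-left block, and symmetrically the lower-left block becomes $-G' + G' = 0$. So $J_s^{ab}$ collapses to the block-diagonal matrix $\diag(J_s^a, J_s^b)$. This single structural observation, verified by writing out the block sum in one line, is the only non-trivial ingredient.

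Once this is established, the spectrum of $J_s^{ab}$ is simply the multiset union $\{\lambda_i^a\} \cup \{\lambda_j^b\}$, so the two largest eigenvalues must fall into exactly one of three scenarios. Both eigenvalues drawn from $J_s^a$ gives $S_2(J_s^{ab}) = \lambda_1^a + \lambda_2^a < 0$ by weak contraction of $f_a$. Both drawn from $J_s^b$ gives $S_2(J_s^{ab}) = \lambda_1^b + \lambda_2^b \le 2\lambda_1^b < 0$ by contraction of $f_b$. One drawn from each block gives $S_2(J_s^{ab}) = \lambda_1^a + \lambda_1^b < 0$ by the theorem's hypothesis. Since these three scenarios are exhaustive and in each one $S_2(J_s^{ab})$ is strictly negative, the interconnected system is weakly contracting.

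There is no real technical obstacle here; the skew-symmetric cancellation does all the work, and the remainder is a short case split on which blocks contribute the top two eigenvalues. The only subtlety worth flagging, if one wishes to state the result under a nontrivial common metric transformation $\Theta$, is that $\Theta$ must be chosen block-diagonal and conformal to the $(a,b)$ partition so that the cancellation structure persists in the generalized Jacobian $F = \Theta J^{ab}\Theta^{-1} + \dot\Theta \Theta^{-1}$. Without such compatibility the off-diagonals reappear and a sharper argument, likely invoking the Ky Fan variational representation of $S_2$ from the earlier remark, would be required to bound the cross terms.
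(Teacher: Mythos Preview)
Your proof is correct and follows essentially the same approach as the paper: both arguments observe that the skew-symmetric off-diagonal blocks cancel in the symmetric part, leaving $J_s^{ab}=\diag(J_s^a,J_s^b)$, and then perform a case split on which blocks contribute the two largest eigenvalues. Your three-case enumeration is slightly more explicit than the paper's two-case version (which tacitly assumes $\lambda_1^a$ is the overall maximum), but the underlying idea is identical.
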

\begin{proof}

The symmetric part of the interconnection's Jacobian is
\[
J^{ab}_s(x) = \begin{bmatrix}J^a_s(x)& 0\\ 0 &J^b_s(x)\end{bmatrix}
\]
and so each eigenvalue $\lambda_i^a$, $\lambda_j^b$, $i = 1, 2, ...$, $j= 1, 2, ... $ of $J^a_s(x), J^b_s(x)$ is an eigenvalue of $J^{ab}_s$

Either $\lambda_1^b \le \lambda_2^a$ or $\lambda_1^b > \lambda_2^a$. In the first case the combined system has the same $\lambda_1+\lambda_2$ as system $a$. In the second case, $\lambda_1^b$ becomes the new $\lambda_2^{ab}$ and so $\lambda_1^a+\lambda_1^b<0$ implies weak contraction.
\end{proof}

Note from the proof that a less restrictive (although possibly
less convenient) test would be that the sum of the two largest of
$\lambda_1^a, \lambda_2^a, \lambda_1^b, \lambda_2^b$ should be
strictly negative.

Following~\cite{tabareau2006notes}, the result can easily be extended
to more complex feedback interconnections by using metric
transformations. In particular, it extends immediately to any constant
loop gain $ \ k > 0$ ,
\[
J^{ab}(x) = \begin{bmatrix}J^a(x) & k \ G(x) \\ -G(x)' & J^b(x) \end{bmatrix}
\]
by using the metric transformation
\[
\Theta = \begin{bmatrix}I_n & 0\\0 & \sqrt k \ I_m\end{bmatrix}
\]
where $n$ and $m$ are the state dimensions of systems $a$ and $b$.

The results extend by recursion to similar feedback combinations of
any number of systems.

\subsection{Hierarchical Interconnections}

Consider a hierarchical connection of systems with the Jacobian
\[
J^{ab}(x) = \begin{bmatrix}J^a(x) & G(x) \\ 0 & J^b(x) \end{bmatrix}
\]
for arbitrary $G(x)$, with either system $a$ or $b$ weakly contracting, and the other contracting.

\begin{theorem} If $\lambda_1^a+\lambda_1^b<0$ then the hierarchical interconnection is weakly contracting.
\end{theorem}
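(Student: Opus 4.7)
Unlike the skew-symmetric case, the symmetric part of $J^{ab}$ is not block diagonal: it is
\[
J_s^{ab} = \begin{bmatrix} J_s^a & G/2 \\ G'/2 & J_s^b \end{bmatrix},
\]
so the eigenvalues of $J_s^a$ and $J_s^b$ are no longer automatically eigenvalues of $J_s^{ab}$. My plan is to use a constant block-diagonal metric transformation to make the coupling as small as desired, then invoke continuity of $S_2$ to finish.

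Specifically, I would apply $\Theta = \diag(I_n,\, \sigma I_m)$ for a scalar $\sigma > 0$ to be chosen. A direct computation gives the generalized Jacobian
\[
F(\sigma) = \Theta J^{ab} \Theta^{-1} = \begin{bmatrix} J^a & G/\sigma \\ 0 & J^b \end{bmatrix},
\]
so
\[
F_s(\sigma) = \begin{bmatrix} J_s^a & 0 \\ 0 & J_s^b \end{bmatrix} + \frac{1}{2\sigma}\begin{bmatrix} 0 & G \\ G' & 0 \end{bmatrix}.
\]
As $\sigma\to\infty$, $F_s(\sigma)$ converges to the block-diagonal matrix whose spectrum is exactly $\{\lambda_i^a\}\cup\{\lambda_j^b\}$. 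The same two-case argument used in Theorem~\ref{thm:fbk} (together with the symmetric case where the roles of $a$ and $b$ are swapped) then shows that
\[
S_2\bigl(\diag(J_s^a,J_s^b)\bigr) \;\le\; \max\bigl(\lambda_1^a+\lambda_2^a,\ \lambda_1^a+\lambda_1^b,\ \lambda_1^b+\lambda_2^b\bigr) \;<\; 0,
\]
using that one of the subsystems is contracting, the other weakly contracting, and the standing hypothesis $\lambda_1^a+\lambda_1^b<0$.

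To close the argument I would appeal to continuity of $S_2$: by Ky Fan's representation (or Weyl's inequalities for eigenvalues), $S_2$ is Lipschitz on symmetric matrices, so
\[
S_2(F_s(\sigma)) \;\le\; S_2\bigl(\diag(J_s^a,J_s^b)\bigr) + \frac{C\,\|G(x)\|}{\sigma}
\]
for a universal constant $C$. Since $\mathcal X$ is compact and $G$ is continuous, $\|G(x)\|$ is uniformly bounded, and by hypothesis the leading term is uniformly bounded above by some $-\delta<0$. Choosing $\sigma$ large enough that $C\|G\|_\infty/\sigma < \delta$ yields $S_2(F_s(\sigma)) < 0$ for all $x\in\mathcal X$, which is weak contraction in the metric $\Theta^T\Theta$.

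The main obstacle is the step from the block-diagonal limit back to a uniform choice of $\sigma$: one must know that $\lambda_1^a+\lambda_1^b$ (or equivalently the $S_2$ of the block-diagonal matrix) is \emph{uniformly} strictly negative on $\mathcal X$, and that $\|G(x)\|$ is uniformly bounded there. Both follow from compactness of $\mathcal X$ and continuity, but they are the only reason a single constant metric works; in a non-compact setting one would have to allow $\Theta$ to depend on $x$ and carry the extra $\dot\Theta\Theta^{-1}$ term in $F$.
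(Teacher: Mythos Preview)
Your proof is correct and follows essentially the same route as the paper: introduce a block-diagonal constant metric $\Theta=\diag(I_n,\sigma I_m)$ so that the off-diagonal coupling in the symmetric part of the transformed Jacobian scales like $1/\sigma$, pass to the block-diagonal limit whose spectrum is $\{\lambda_i^a\}\cup\{\lambda_j^b\}$, then use compactness of $\mathcal X$ to choose a single $\sigma$ uniformly and finish with the two-case argument of Theorem~\ref{thm:fbk}. The paper parametrizes by a small $\epsilon$ rather than a large $\sigma$ and simply invokes continuity of eigenvalues, whereas you make the perturbation bound explicit via the Lipschitz property of $S_2$; your three-term maximum also correctly covers the possibility that $b$ (rather than $a$) is the weakly contracting block, which the paper handles only implicitly by saying the remainder follows Theorem~\ref{thm:fbk}.
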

\begin{proof}
Let $n$ and $m$ be the state dimensions of systems $a$ and $b$. Consider the family of transformations
\[
\Theta = \begin{bmatrix}I_n & 0\\0 & \epsilon I_m\end{bmatrix}
\]
where $\epsilon>0$ is a parameter, and $I_r$ is the $r$-dimensional identity matrix. The symmetric part of the interconnection's Jacobian is
\[
J^{ab}_s(x) = \begin{bmatrix}J^a_s(x)&\frac{\epsilon}{2} G(x)\\ \frac{\epsilon}{2} G(x)' &J^b_s(x)\end{bmatrix}
\]
and clearly for each $x$ there is a sufficiently small $\epsilon$ such that the eigenvalues of $J^{ab}_s$ can be brought arbitrarily close to the eigenvalues of the matrix
\[
\begin{bmatrix}J^a_s(x)& 0\\ 0 &J^b_s(x)\end{bmatrix}
\]
and since $\mathcal X$ is compact, one can find an $\epsilon$ that works uniformly for $x\in \mathcal X$. The remainder of the proof follows exactly that of Theorem \ref{thm:fbk}.
\end{proof}

The remarks following the skew-symmetric interconnection also apply here.

By recursion, any combination of the above combinations yields a
weakly contracting system, and therefore every bounded trajectory of
the overall system tends to an equilibrium.

%
%
%
%
%
%

\section{Extensions}

In this section, we consider more generally a {\em non-autonomous} system
\begin{equation}\label{eq:sys_na}
\dot x = f(x,t)
\end{equation}

\subsection{Exponential convergence of $i$-dimensional volumes}

Proceeding exactly as in~\cite{leonov1996frequency,boichenko2005dimension}, one can show
\[
\frac{d}{dt}\|\delta z_1 \wedge  \delta z_2\| \ \le \ (\lambda_1+\lambda_2)\ \|\delta z_1 \wedge \delta z_2\|
\]
where each $\ \delta z \ $ now represents a differential displacement
weighted by the metric transformation $\Theta(x,t)$, that is $\ \delta
z = \Theta(x,t) \delta x \ $ \cite{lohmiller1998contraction}, the $
\lambda_j \ $ are the ranked eigenvalues of the symmetric (or
Hermitian) part of the generalized Jacobian associated with
$\Theta(x,t)$, and $ \ \wedge \ $ denotes the vector product.  The
above implies that if $ \ \lambda_1+\lambda_2\ $ is upper bounded by
some negative constant $ -\alpha$, then the {\em area} of differential
surfaces shrinks exponentially with rate $\alpha$. If this condition
on the largest two eigenvalues is verified in some metric, we will
also call the non-autonomous system {\em weakly contracting}.

As in~\cite{leonov1996frequency,boichenko2005dimension}, the result
extends to sub-volumes of higher dimensions, all the way to the familiar Gauss theorem
relating rate of change of volume to system divergence,
\[ \forall i = 1,..., n \ \ \ \ \ \ \ \ \ \ \ \ 
\frac{d}{dt}\|\delta z_1 \wedge ... \wedge  \delta z_i\| \ \le \ (\lambda_1+ .... + \lambda_i)\ \|\delta z_1 \wedge ... \wedge  \delta  z_i\|
\]
where $ \ \wedge \ $ denotes more generally the outer product and 
$ \|\delta z_1 \wedge ... \wedge  \delta z_i\| \ $  measures 
the volume of the polytope constructed on the $ \ \delta z_j \ $ \cite{Schwartz}.

Note that in the case of weakly contracting {\em autonomous} systems,
the result in~\cite{leonov1996frequency} (for identity or constant metric) 
is more precise $ - $ not only all surfaces shrink, but actually any bounded trajectory
asymptotically tends to some equilibrium (and not just to some $1$-dimensional
manifold).

\subsection{Storage function}

As noticed earlier, \cite{leonov1996frequency,boichenko2005dimension} also suggest
the introduction of a storage function to reduce conservatism and obtain more general
bounds. This is also the approach of \cite{Pogromsky} in the more specific application
of convergence to trajectories. In the context of this paper, the same modification 
can be derived as follows. 

First, consider a metric transformation of the simple form $ \Theta_e
(t) = e^{\gamma(t)} \ I\ $, where $ \gamma(t) $ is a differentiable
scalar function. This leads to the generalized Jacobian
\[\Theta_e J \Theta_e^{-1} \ + \ \dot{\Theta}_e \Theta_e^{-1} \ = \ J(x) \ + \ \dot \gamma(t)\  I
\]
More generally, given an original $\ \Theta(x,t) \ $, an augmented  metric transformation 
of the form 
\[ \Theta_e (x,t) = e^{\gamma(x,t)} \ \Theta(x,t) 
\]
yields an extra term $\ \dot \gamma\ I \ $ in the generalized Jacobian $ \ F_e \ $,
\[F_e \ = \ \Theta_e J \Theta_e^{-1} \ + \ \dot{\Theta}_e \Theta_e^{-1} \ = \ \Theta J \Theta^{-1} \ + \ \dot{\Theta} \Theta^{-1} \ + \  \dot \gamma\  I\ = \ F + \  \dot \gamma\  I
\]

Assume now that the function $\gamma(x,t)$ we chose is {\em bounded}, 
similarly to \cite{leonov1996frequency,boichenko2005dimension,Pogromsky}. 
By construction, letting
\[ \delta z = \Theta_e \delta x
\]
we have
\[\frac{d}{dt} \ \delta z \ \le \ \lambda_1(F_e) \ \delta z
\]
so that
\[\forall t \ge 0\ \ \ \ \ \ \ \ \ \|\delta z(t)\| \ \le \  \|\delta z(t=0)\| \ e^{\ \int_o^t  \lambda_1(F_e)(t)\ dt}
\]
Since $\gamma(x,t)$ is bounded, in the exponential above it will
be dominated by the original contraction rate,
\[
\int_o^t  \lambda_1(F_e)(t)\ dt \ = \ \int_o^t \lambda_1(F)(t)\ dt\ +\ \gamma(x,t)\ -\ \gamma(x(0),0)
\]
and thus as $\ t \rightarrow +\infty \ $
the contraction rate computed from $ \ F_e \ $ will be the same as the contraction rate
computed from $ \ F \ $.

These results extend straightforwardly to the weaker forms of
contraction discussed above. 

\begin{theorem} Consider a non-autonomous system $ \ \dot x = f(x,t) \ $.
Assume there exist a metric transformation $\Theta(x,t)$, and
a differentiable bounded scalar function $ \ \gamma(t)$, such that
\[ \exists \ \alpha > 0 \ \ \ \ \ \ \forall t \ge 0\ \ \ \ \ \ \ \ \ \ ( \lambda_1 \ + \ .... \ + \ \lambda_i) \ + \ {\dot \gamma} \ \le \ -\ \alpha
\]
where the $ \lambda_j $ are the ranked eigenvalues of the Hermitian
part of the generalized Jacobian associated with $\Theta(x,t)$. Then
any $i$-dimensional volume shrinks exponentially to zero with rate 
$ \ \alpha $.
\end{theorem}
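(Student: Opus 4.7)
The plan is to reduce this to the $i$-dimensional volume inequality already established in Section~3.1 by absorbing the scalar storage function $\gamma$ into an augmented metric, exactly as was done earlier for the $\lambda_1$ case. The only wrinkle is that we want a single $\dot\gamma$ added to the \emph{sum} of the top $i$ eigenvalues, so the scalar factor in the metric must be calibrated accordingly.

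First I would define the augmented metric transformation $\Theta_e(x,t) = e^{\gamma(t)/i}\,\Theta(x,t)$. A direct computation, paralleling the one given for $\Theta_e = e^{\gamma}\Theta$ in the paragraphs preceding the theorem, shows that the generalized Jacobian associated with $\Theta_e$ is
\[
F_e \;=\; \Theta_e J \Theta_e^{-1} + \dot\Theta_e\Theta_e^{-1} \;=\; F + \tfrac{\dot\gamma}{i}\,I .
\]
Taking Hermitian parts, $(F_e)_H = F_H + (\dot\gamma/i)\,I$, so every eigenvalue of $(F_e)_H$ is shifted by the same scalar $\dot\gamma/i$. Summing the top $i$ eigenvalues then yields
\[
\lambda_1(F_{e,H}) + \cdots + \lambda_i(F_{e,H}) \;=\; \bigl(\lambda_1(F_H) + \cdots + \lambda_i(F_H)\bigr) + \dot\gamma \;\le\; -\alpha
\]
by the hypothesis of the theorem.

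Next I would apply the $i$-volume inequality recalled in Section~3.1, but using the augmented metric: letting $\delta z_j = \Theta_e \,\delta x_j$, one obtains
\[
\frac{d}{dt}\,\|\delta z_1\wedge\cdots\wedge \delta z_i\| \;\le\; \bigl(\lambda_1(F_{e,H})+\cdots+\lambda_i(F_{e,H})\bigr)\,\|\delta z_1\wedge\cdots\wedge \delta z_i\| \;\le\; -\alpha\,\|\delta z_1\wedge\cdots\wedge \delta z_i\|,
\]
so the $i$-volume measured in the $\Theta_e$-metric decays like $e^{-\alpha t}$. To finish, I would transfer the bound back to the $\Theta$-metric (or to the original $\delta x$-volume). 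Since $\Theta_e$ differs from $\Theta$ only by the uniformly bounded scalar factor $e^{\gamma(t)/i}$, the two $i$-volumes are related by a multiplicative factor $e^{\gamma(t)}$, which is bounded above and below. This constant factor is dominated by the $e^{-\alpha t}$ decay, so the original $i$-volume also shrinks exponentially with rate $\alpha$.

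The main step that actually requires care is the calibration of the scalar in the augmented metric: taking $e^{\gamma}\Theta$ as in the $\lambda_1$ case would produce $i\dot\gamma$ rather than $\dot\gamma$ in the eigenvalue sum, so one must rescale to $e^{\gamma/i}\Theta$ to match the form of the hypothesis. Everything else — the eigenvalue shift under a scalar additive perturbation, the earlier differential inequality for $i$-volumes, and the asymptotic irrelevance of a bounded $\gamma$ — has already been justified in the preceding text and can be invoked directly.
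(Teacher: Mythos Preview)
Your proof is correct and follows the same strategy the paper sketches in the paragraphs preceding the theorem: augment the metric by a scalar factor involving $\gamma$, observe that the generalized Jacobian shifts by a multiple of the identity, and then use boundedness of $\gamma$ to conclude that the asymptotic rate is unaffected. Your observation that one must calibrate the scalar to $e^{\gamma/i}$ rather than $e^{\gamma}$ so that the eigenvalue sum picks up exactly $\dot\gamma$ (not $i\dot\gamma$) is precisely the detail the paper sweeps under ``extends straightforwardly''.
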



The above may also be applied to virtual systems in the sense of~\cite{wang2005partial},
which by nature are non-autonomous.

Also note that the system combination properties derived earlier extend straightforwardly
to non-autonomous systems.

\section{Discussion}

Just as contraction can be understood in terms of differential line
elements and Riemannian distances, weak contraction can be understood
in terms of shrinking of differential planar areas. A key insight
of~\cite{leonov1996frequency} is that, in the case of autonomous
systems, weak contraction does not just imply that all surfaces
shrink, but actually more precisely that any bounded system trajectory
tends to some {\em equilibirum point}.  This is achieved by proving
that neither limit cycles or more complex behaviors can occur in that
case.

To prove the non-existence of limit cycles~\cite{leonov1996frequency}, 
assume a that closed curve ${\mathcal C}$ is preserved
under the flow of the system, and consider the two-dimensional
submanifold of smallest area having $\mathcal C$ as a boundary (like a
film of soapy water on a loop for blowing bubbles). Since this area
must shrink under the flow of the system, this contradicts the
preservation of $\mathcal C$.  The extension to nonlinear metrics and
Riemannian area integrals discussed in this paper is natural.

The non-existence of more complex (chaotic) behaviour follows from the
fact that the weak contraction conditions are open -- i.e. the strict
inequality means they are preserved under small perturbations of $f$
-- and the Pugh closing lemma which states, roughly speaking, that any
chaotic attractor can be perturbed by a small amount to result in a
closed cycle, see~\cite{leonov1996frequency}.
%

It is interesting to note that transverse contracting systems
\cite{manchester2014transverse} satisfy $ \ \lambda_1+\lambda_2 \ =
\ \lambda_2 \ <\ 0$ for some choice of metric. Transverse contracting
systems have the property that all solutions converge to the same
limit set, which is either a unique equilibrium or a unique limit
cycle.  This does not contradict the above results, as convergence to
a limit cycle can only occur if the domain of transverse contraction
is not a simply connected set. In fact, if a system is transverse
contracting on a simply connected set, then all solutions converge to
a unique equilibrium (note again that transverse contraction is a
weaker condition than contraction on the same set, but convergence may
only be asymptotic).

Since combination properties are based on algebraic rather than
topological considerations, the fact that transverse contracting
systems satisfy $ \ \lambda_1+\lambda_2 \ < \ 0$ in some metric also
explains the similarity between the combination properties of weakly
contracting systems (section~\ref{combinations}) and transverse
contracting systems (\cite{manchester2014transverse}, section 4) when
connected to a contracting system.

In biology and robotics (e.g. locomotion), natural behaviours of
dynamic systems include convergence to an equilibrium (unique or not)
and oscillation, so weak contraction and transverse contraction can
potentially provide useful frameworks for studying interconnection of
biological systems.


Synchronisation is a special case of weak contraction in which
$\lambda_1=0$ and the rate of convergence to a synchronised state is
given by the first non-zero eigenvalue (which, for identical
subsystems of dimension $m$, is $ \lambda_{m+1}$). Contraction behavior
of the synchronized system can in turn be analyzed from the reduced
(quotient) system, where all synchronized states are collapsed to a
single state~\cite{wang2005partial}.

The use of norms other than the Euclidean norm in the above results,
similarly to the contraction context of
\cite{lohmiller1998contraction}, is a direction of future research.

\bibliographystyle{unsrt}
\bibliography{ref}

\end{document}